  \newtheorem{theorem}{Theorem}[section]
  \newtheorem{lemma}[theorem]{Lemma}
  \theoremstyle{definition}
  \numberwithin{equation}{section}
  \newcommand{\R}{{\mathbb R}}
\title{minimal Piecewise Linear Cones in $\mathbb{R}^4$}
\author[A. Valfells]{Asgeir Valfells}
\address{Department of Mathematics, Rice University, Houston, TX~77005, USA}
\email{Asgeir@rice.edu}
\thanks{}
\begin{document}
\begin{abstract}
	 We consider three dimensional piecewise linear cones in $\mathbb{R}^4$ that are mass minimal w.r.t. Lipschitz maps in the sense of \cite{almgren1976existence} as in \cite{Taylor76}. There are three that arise naturally by taking products of $\mathbb{R}$ with lower dimensional cases and earlier literature has demonstrated the existence of two with 0-dimensional singularities. We classify all possible candidates and demonstrate that there are no p.l. minimizers outside these five. 
\end{abstract}

\maketitle
\section{Introduction}

Note that after this work was essentially completed we learned that the main result was known and communicated by K. Brakke in 1993. While we have seen a reference to this manuscript we have not found it anywhere in the available literature or through personal correspondence.\\

In her 1976 analysis of the singularities of of two-dimensional $M(\epsilon,\delta)$ sets in $\mathbb{R}^3$ Jean Taylor classified all possible tangent cones at these singularities. These minimal cones were $\R^2$, the cartesian product of $\R$ and three half-lines meeting at $120^{\circ}$ angles (the $\mathbb{Y}$-singularity), and the cone over the 1-skeleton of a regular tetrahedron (the $\mathbb{T}$-singularity).\\

An ambitious goal, one that lies beyond our grasp, would be a similar complete analysis for three-dimensional minimal sets in $\R^4$, specifically classifying the minimal cones. The analogous strategy to that used to classify candidates in lower dimensions is to consider their intersections with $S^3$. From either the monotonicity lemma for minimal sets \cite{Taylor76} or direct calculation we can see that the tangent cones at each point of this intersection must be minimal. The classification of the minimal two dimensional cones tells us the singular points of this intersection are either the $\mathbb{Y}$-singularity or the $\mathbb{T}$-singularity. An argument of from \cite{Fleming1962} then tells us that the regular points must be locally minimal w.r.t. $S^3$ away from the singular set. \\

Considering all candidates in this usual manner would call for classifying all minimal surfaces in $S^3$ admitting $\mathbb{Y}$-type and $\mathbb{T}$-type singularities. This is too tall an order since even regular minimal surfaces in $S^3$ are as of yet unclassified. Restricting ourselves to piecewise linear cones in $\R^4$ we have a much more tenable problem. Note that this restriction arises for free in $\R^3$ since sections of great circles in $S^2$ correspond to sections of planes. \\

Throughout this paper we will show (culminating in theorem 3.4) that the only minimal piecewise linear cones in $\mathbb{R}^4$, up to rotation, are: $\mathbb{R}^3 \times 0$, $\R^2 \times \mathbb{Y}$, $\R \times \mathbb{T}$, the cone over the $2$-skeleton of the $4$-simplex, and the cone over the $2$-skeleton of the hypercube.

\section{Classifying the candidates}
The three dimensional cones are determined by their intersections with $S^3$ so for now we can restrict our focus to this intersection. Furthermore we are only considering p.l. cones so this intersection will be comprised of spherical polygons that partition $S^3$ into 3-cells. If we call the zero-dimensional singularities in the intersection vertices and the one-dimensional singularities be edges we are able to say that on the boundary of these 3-cells the vertices must have precisely 3 adjacent edges meeting at $\text{arccos}(-1/3)$ (hereafter $\alpha$) and the faces meet at dihedral angle $2\pi/3$. For the rest of this section all polygons will be isogonal spherical polygons with angle $\alpha$. Following the strategy of Heppes almost exactly \cite{Heppes64} see that up to rotation and translation there are at most 11 types of 3-cells that can occur. Call these the admissible 3-cells.\\ \\
($C_1$) Half of $S^3$, with a single face $S^2 \times {0}$.\\
($C_2$) Two sections of $S^2$ meeting at a circle.\\
($C_3$) A trihedron, comprised of 3 bigons.\\
($C_4$) A regular tetrahedron comprised of 4 regular triangles.\\
($C_5$) A triangular prism, comprised of 2 regular triangles and 3 rectangles of appropriate side lengths.\\
($C_6$) A cube, comprised of 6 squares.\\
($C_7$) A pentagonal prism, comprised of 2 regular pentagons and 5 rectangles w. appropriate side lengths.\\
($C_8$) A regular dodecahedron, comprised of 12 regular pentagons.\\
($C_9$) A decahedron comprised of 2 squares and 8 symmetric pentagons with base length equal to that of the square. Each square is adjacent to 4 distinct pentagons, two bands of pentagons separate the squares. The 1-skeleton is illustrated in fig. 10 of \cite{Taylor76}.\\
($C_{10}$) A nonahedron comprised of 3 squares and 6 symmetric pentagons whose non-adjacent equal sides have length equal to that of the square. Each pentagon is adjacent to squares on all sides of appropriate length. The 1-skeleton is illustrated in fig. 12 of \cite{Taylor76}.\\
($C_{11}$) An octahedron comprised of 4 rectangles and 4 symmetric pentagons. Each rectangle is adjacent to one other rectangle, the pentagons form a band separating the pairs of rectangles. The 1-skeleton is illustrated in fig. 11 of \cite{Taylor76}.\\ \\

When finding all partitions of $S^3$ into these cells it is useful to note that since any faces must meet at dihedral angle $2\pi/3$ any two cells that are adjacent to one another must share a face. Thankfully the faces on each of these 3-cells is uniquely determined so we will be able to distinguish many faces from one another and thus determine along which faces two admissible 3-cells can be adjacent. Clearly the regular faces are distinct from the non-regular faces, so it remains to show that the non-regular faces are in fact distinct. 

\begin{lemma}
The non-regular faces of admissible 3-cells are distinct.
\end{lemma}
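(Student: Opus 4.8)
The plan is to turn the statement into a finite comparison of edge-length data and then to extract that data by spherical trigonometry. First I would record that every relevant face is an \emph{isogonal} spherical polygon, namely a geodesic polygon on a great $2$-sphere all of whose interior angles equal $\alpha=\arccos(-1/3)$, and that the non-regular faces fall into exactly two combinatorial types: the rectangles (equiangular, non-square quadrilaterals) occurring in $C_5$, $C_7$ and $C_{11}$, and the symmetric pentagons (equiangular pentagons with a reflection symmetry but not regular) occurring in $C_9$, $C_{10}$ and $C_{11}$. A quadrilateral and a pentagon have different numbers of edges, so no rectangle is congruent to a symmetric pentagon, and the remaining special faces (the $2$-sphere of $C_1$, the spherical caps of $C_2$, and the bigons of $C_3$) are of manifestly different type. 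It therefore suffices to prove that the three rectangles are pairwise non-congruent and that the three symmetric pentagons are pairwise non-congruent.

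The structural fact that makes this feasible is rigidity. The space of geodesic $n$-gons on $S^2$ modulo isometry has dimension $2n-3$, and imposing that all $n$ interior angles equal $\alpha$ cuts this down to $n-3$; thus equiangular quadrilaterals form a one-parameter family and equiangular pentagons a two-parameter family, and each face is pinned down (up to isometry of $S^2$) by a short list of its side lengths. I would therefore attach to each non-regular face its multiset of edge lengths: since congruent faces must have equal edge-length multisets, it is enough to show the six multisets are pairwise distinct.

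For the rectangles I would use the prism structure. In $C_5$ each rectangle shares an edge with a regular triangle, so two of its edges have the common length $s_3$ of the equiangular spherical triangle, for which the spherical law of cosines gives $\cos s_3=\cos\alpha/(1-\cos\alpha)=-1/4$. In $C_7$ each rectangle instead shares an edge with a regular pentagon, so two of its edges have length $s_5\neq s_3$. The complementary pair of edges (the prism height) is then determined by the equiangularity relation between the two side lengths of an equiangular quadrilateral, and I would compute the resulting unordered pairs for $C_5$ and $C_7$, and the two edge lengths of the $C_{11}$ rectangle directly from the octahedral cell, and check that the three pairs are pairwise different.

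For the symmetric pentagons I would read the edge lengths off the stated closing conditions and compare: the pentagon of $C_9$ has its base equal to the square side $s_4$, the pentagon of $C_{10}$ has a pair of non-adjacent edges equal to $s_4$, and the pentagon of $C_{11}$ has its edges governed instead by the rectangles and neighbouring pentagons. Splitting each pentagon along its symmetry axis into right-angled triangles and applying the law of cosines produces the remaining edge lengths, and the three pentagons already differ in how many edges equal $s_4$. The main obstacle I anticipate is precisely this last computation: equiangularity forces the side lengths to be transcendental and the pentagon conditions couple three unknown lengths, so care is needed both to confirm that the stated conditions really determine each face and to rule out an accidental coincidence of edge-length multisets. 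A clean uniform finish is to evaluate one isometry invariant --- for instance the sorted tuple of edge lengths, or a fixed diagonal length --- numerically for all six non-regular faces and observe that the six values are distinct; should any multisets coincide, the cyclic order of the edges or a diagonal length separates the faces.
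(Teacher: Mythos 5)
Your reduction agrees with the paper's: dismiss the faces of manifestly different combinatorial type, then show the three rectangles $r_5,r_7,r_{11}$ are pairwise non-congruent and the three symmetric pentagons $p_9,p_{10},p_{11}$ are pairwise non-congruent, exploiting the rigidity of isogonal spherical polygons. Where you diverge is in execution: you propose to determine all six edge-length multisets explicitly (ultimately numerically) and compare them, whereas the paper never computes the dimensions of $C_{11}$ at all. It instead argues by contradiction from qualitative facts cited from Heppes: an isogonal pentagon with three equal sides is regular; a symmetric isogonal pentagon is determined uniquely by a single given side length; and the two side lengths $x,y$ of an isogonal rectangle are linked by $\cos(\pi-\alpha)=\tan(x/2)\tan(y/2)$, so one side determines the other and both cannot be smaller than $a_4$. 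With these, every comparison except $r_5$ versus $r_7$ is settled without computing anything, and that one case reduces to the single numerical check $\tan(a_3/2)\tan(a_5/2)\neq\cos(\pi-\alpha)$.

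The genuine gap in your plan sits exactly where the paper spends its effort: the faces of $C_{11}$. You say you would read ``the two edge lengths of the $C_{11}$ rectangle directly from the octahedral cell,'' but nothing in the description of $C_{11}$ prescribes any edge length; unlike $C_5$, $C_7$, $C_9$, $C_{10}$, whose non-regular faces are tied to a regular polygon of known side, the shape of $C_{11}$ is fixed only implicitly by the closure of the whole octahedron (all face angles $\alpha$, all dihedral angles $2\pi/3$), and your proposal offers no method to solve for it. Relatedly, your claim that the three pentagons ``already differ in how many edges equal $s_4$'' cannot be asserted in advance: that $p_9$ has exactly one such edge and $p_{10}$ exactly two needs the three-equal-sides fact, and that $p_{11}$ has none is equivalent to knowing that $r_{11}$ is not the square (plus control of its pentagon--pentagon edges) --- i.e.\ it is part of what is to be proved, not an input. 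The paper closes these cases by contradiction: if $r_{11}$ had a side of length $a_4$ or $a_5$, the uniqueness facts would force the pentagonal faces of $C_{11}$, and then $r_{11}$ itself, to be regular, which is impossible; and $p_{10}=p_{11}$ would force all sides of $r_{11}$ below $a_4$, contradicting the rectangle relation. If you either import those uniqueness facts or actually solve the closure equations for $C_{11}$, your multiset comparison becomes a complete proof; as written, it is not.
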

\begin{proof}
Throughout this proof we will use helpful spherical geometry facts proved in a paper by Heppes \cite{Heppes64}. First we will prove that the rectangles in $C_5,C_7,C_{11}$ (hereafter $r_{.}$) are distinct from one another. We'll denote the length of the regular n-gon as $a_n$.\\ \\
(i) $r_{11}$ is non-regular|If $r_{11}$ has all sides of length $a_4$ then the pentagonal faces of $C_{11}$ have three sides of length $a_4$. However we know that a pentagon with three equal sides must be regular and cannot have edge length $a_4$. \\ \\
(ii) $r_5 \neq r_7 |$ Take a spherical rectangle w. angle $\alpha$ edge lengths $a_5$ and $a_3$, extending the edges and applying the cosine law to the protruding triangle gives $\cos(\pi-\alpha)=\tan(a_5/2)\tan(a_3/2)$. A direct calculation shows that this does not hold.\\ \\
(iii)$r_5 \neq r_{11}|$ If $r_{11}$ have a side of length $a_5$ then $C_{11}$ must have a symmetric pentagonal face with a side of length $a_5$. We know that a symmetric pentagon is determined uniquely by a single given side-length so the pentagon in question is the regular pentagon. From there we see that all the pentagonal faces in $C_{11}$ are regular pentagons so $r_{11}$ has all sides of length $a_5$ which cannot hold.\\ \\

Next we show the pentagons in $C_9$, $C_{10}$ and $C_{11}$ (hereafter $p_.$) are distinct. \\ \\
(v)$p_9 \neq p_{10}|$ A pentagon with three equal sides is regular and therefore cannot have side-length $a_4$.\\ \\
(vi)$p_9 \neq p_{11}|$ We know $r_{11}$ doesn't have side-length $a_4$ so $p_{11}$ cannot have a base of length $a_4$.\\ \\
(vii)$p_{10} \neq p_{11}|$ The longest sides of $p_{10}$ are the two of length $a_4$. Were $p_{10}=p_{11}$ then $r_{11}$ would have all sides shorter than $a_4$, which cannot occur.
\end{proof}
We've now identified each of the faces on these geometrically rigid admissible 3-cells, so we can continue on to partition $S^3$. In contrast with the analogous partition of $S^2$ into 2-cells, where there exist entire families of admissible cells we have few options locally and will follow a straight path in constructing these partitions.\\

There are three types of 3-cells that can only be adjacent to copies of themselves, $C_1$, $C_2$, and $C_3$. These partition $S^3$, respectively, into two, three, and four cells and we quickly see that the induced cones (that we get by taking the cone over the boundary of the 3-cells) are products of lower dimensional singularities with Euclidian space. Respectively these cones are, up to rotation, $ \R^3 \times 0$, $\R^2 \times \mathbb{Y}$, and $\R \times \mathbb{T}$. Constructing the other partitions is a slightly more delicate matter since it is not predetermined which 3-cell lay adjacent to another.\\ 

A helpful tool we'll use in this next lemma is the dual graph to a partition. This will be a graph where each 3-cell will be represented by a colored (depending on type) vertex with an edge between vertices that correspond to 3-cells that are adjacent along an edge. There is a richer structure available by taking a true dual polytope to the convex hull of the vertices, but we do not need it.

\begin{lemma}
There are at most 9 partitions of $S^3$ into admissible 3-cells.
\end{lemma}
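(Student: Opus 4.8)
The plan is to turn the geometric rigidity of the faces into a finite search over a colored dual graph, and to show that only nine graphs close up into a partition of $S^3$. First I would assemble a complete gluing table. For each cell $C_4,\dots,C_{11}$ I list its faces and, using the distinctness lemma together with the already-noted fact that two adjacent cells must share a geometrically rigid face, I record exactly which cell types may sit across each face. The faces then fall into two classes: \emph{flexible} faces, shared by several cells---regular triangles join $C_4$ and $C_5$, squares join $C_6,C_9,C_{10}$, and regular pentagons join $C_7$ and $C_8$---and \emph{rigid} faces $r_5,r_7,r_{11},p_9,p_{10},p_{11}$, each of which occurs in only one cell and hence can be glued solely to a congruent copy of itself. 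Thus any cell abutting a rigid face immediately forces a second cell of the same type on the other side.

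Next I would dispose of the extreme cases. The cells $C_1,C_2,C_3$ are adjacent only to copies of themselves and give the three product partitions already identified. Since $C_{11}$ has \emph{both} of its face types rigid, any partition containing a $C_{11}$ must consist entirely of $C_{11}$'s, so I can treat it as a single self-tiling question. For $C_4$, $C_6$ and $C_8$ the self-tilings are forced to be the regular spherical honeycombs $\{3,3,3\}$, $\{4,3,3\}$ and $\{5,3,3\}$---the $5$-cell, the tesseract and the $120$-cell---each of which realizes dihedral angle $2\pi/3$, consistent with three cells meeting along every edge and with the trivalent (simple) structure of the cells.

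The substance of the argument, and the main obstacle, is the enumeration of the genuinely mixed partitions. Here I would seed one cell and propagate outward through the gluing table, at each stage enforcing the two local incidence constraints dictated by the singular structure: exactly three cells meet along every edge, giving a triangle in the dual graph, and exactly four cells meet at every vertex, giving a $K_4$. Because each rigid face pins down its neighbor, the prisms $C_5,C_7$ and the polyhedra $C_9,C_{10}$ can only grow along their flexible faces, and every such choice is tightly constrained; I would trace each branch and show that it either closes into one definite partition of $S^3$ or violates an incidence or angle-sum condition and terminates. The delicate part is the bookkeeping---maintaining a consistent partial dual graph while ruling out all but finitely many completions---and it must be organized so that these mixed partitions contribute only the remaining possibilities, bringing the grand total to at most nine.
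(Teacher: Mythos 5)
Your framework is the same as the paper's---use the face-distinctness lemma to build a gluing table, encode partitions as colored dual graphs, and enumerate---but at the places where the lemma actually requires an argument you assert the conclusion rather than prove it. The most serious instance is the all-$C_8$ case. You say the self-tilings of $C_4$, $C_6$, $C_8$ are ``forced'' to be the regular honeycombs; for $C_4$ and $C_6$ this does follow from short ego-graph/dual-graph checks, but for $C_8$ it is simply not true that the local structure forces the global one: there are exactly three graphs that are locally the icosahedral graph, namely the point graph of the $600$-cell and two quotients of it on $40$ and $60$ vertices (this is the classification result the paper cites). Your argument as stated cannot distinguish the genuine $120$-cell partition from these quotients. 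The paper closes this gap with a global geometric input: the rectangle length computed from the dodecahedral-prism partition bounds the diameter of $C_8$ by $\pi-b$, whence $\mathrm{Vol}(C_8)<\tfrac{4}{3}\pi\bigl(\tfrac{\pi-b}{2}\bigr)^3<\mathrm{Vol}(S^3)/60$, so any partition into copies of $C_8$ has more than $60$ cells and must therefore be the $120$-cell. Some such quantitative step is indispensable, and your proposal contains nothing that could play its role.

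The second gap is the mixed enumeration, which you explicitly defer to ``bookkeeping.'' That is where most of the content of the lemma lies, and the branches do not terminate by generic incidence arguments; they need specific geometric obstructions. For instance, to exclude every partition containing a $C_9$, the paper first shows $C_9$ cannot meet $C_9$ across a square (the third cell along an edge of that square would need two $p_9$ faces sharing a base edge, which no admissible cell has), so the squares of $C_9$ must attach to $C_6$'s; it then grows the configuration around a vertex of such a $C_6$ and arrives at a cell that would need three $p_9$ faces meeting at their apexes---again nonexistent. Likewise, uniqueness of the $C_6$/$C_{10}$ partition ($T_9$) requires reconstructing the dual graph explicitly from the prescribed ego graphs, which the paper does vertex by vertex. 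Finally, your reduction of $C_{11}$ to a self-tiling question is a good observation (the paper is in fact silent on $C_{11}$), but you leave it unresolved: to obtain the bound of nine you must still show that no partition into copies of $C_{11}$ exists. As written, your proposal is a plan for the proof, not a proof.
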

\begin{proof}
We'll consider all the partitions in order of highest $C_n$ used. If $n\leq 3$ then as discussed only one type of 3-cells is used. Call the corresponding cones $T_1$, $T_2$, $T_3$.\\ \\
(i)$n=4$| The partition must be made up of only tetrahedrons. The ego-graph at any vertex of the dual graph is therefore $K_4$ so the dual graph is $K_5$. The partition is therefore into 5 copies of $C_4$. The boundaries of the 3-cells in this partition are the two-skeleton of the regular 4-simplex. Call the induced cone $T_4$.\\ \\
(ii)$n=5$| A cell of type $C_5$ must be adjacent to another along the rectangular face, but cannot be adjacent to another along the triangular face since no admissible cell has two faces of type $r_5$ adjacent along the long edge. Letting red vertices correspond to copies of $C_5$ and blue to copies of $C_4$ we see that the ego graph at red vertices is a triangular bipyramid with a red base and blue vertices away from the base. This in turn induces a dual graph with a red $K_4$ and two blue vertices adjacent to every vertex but each other. Geometrically this partition corresponds to a the 2-skeleton of a simplicial prism. Call the corresponding cone $T_5$.\\ \\
(iii)$n=6$| The only 3-cells with square faces are $C_6$. The ego graph at every vertex is $\overline{3K_2}$ and direct inspection gives us the dual graph $\overline{4K_2}$. Geometrically this partition corresponds to the 2-skeleton of a hypercube. Call the corresponding cone $T_6$.\\ \\
(iv) $n=7$|$C_7$ is the only 3-cell with pentagonal faces and having two copies adjacent to one another leads to an immediate geometric obstruction, no 3-cell has two faces of $r_7$ adjacent to one another along the short edge. There are no partitions added here.
(v)$n=8$| If there is at least one 3-cell of type $C_7$ then a similar argument to case (ii) induces partition with two copies of $C_8$ and twelve of $C_7$, the 2-skeleton of a dodecahedral prism. Call the corresponding cone $T_7$.\\ \\
(vi)$n=8$| If there are no 3-cells of type $C_7$ then all cells are of type $C_8$ and the ego graph at each vertex is the icosahedral graph. This tells us that there are three candidates for the dual graph \cite{Blokhuis85}, the point graph of the 600-cell or quotients of it w. 40 or 60 vertices. From (v) we can discern an upper bound $Vol(C_8)$. \\

First we take the isogonal pentagon w. angle $\alpha$. The spherical cosine rule gives us that $a_5=\arccos(\frac{3\cos(2\pi/5)+1}{2})\approx 0.27092$. We can then calculate the long side of the rectangle in $C_7$, b, by applying the spherical cosine law to a protruding triangle, as in $2.1.ii$, giving $b=2\arctan(\frac{\cos(\pi-\alpha)}{\tan(a_5/2)})\approx 2.3653$.
Partition $T_7$ tells us that the diameter of $C_8$ is $\pi-b \approx 0.77631$. We can  bound the volume of $C_8$ from above by $\frac{4}{3}\pi (\frac{\pi-b}{2})^3$. Taken with the surface volume of $S^3$ we get:
$$\text{Vol}(C_8) < \frac{4}{3}\pi (\frac{\pi-b}{2})^3 \approx 0.2447 < \frac{2\pi^2}{60} = \frac{\text{Vol}(S^3)}{60} $$
From here we can conclude that the only possible partition is into 120 cells of type $C_8$. Call the corresponding cone $T_8$.
\\
(vii)$n=9$| First we notice that if two 3-cells of type $C_6$ are adjacent along a face then at each edge of that face they must both be adjacent to another cell of type $C_6$. Inductively we see the only partition with copies of $C_6$ adjacent along a face is $T_6$. If two copies of $C_9$, $M_1$ and $M_2$ are adjacent along a square face then they must both be adjacent to some $M_3$ along a face of type $p_9$ which shares an edge with the respective square faces. From here we see $Q_3$ must have two faces of type $f_9$ which share an edge along their bases, but no such admissible 3-cell exists. Therefore we know that $C_9$ must be adjacent to $C_6$ along the square faces.\\ 

Assume we have a partition with a 3-cell of type $C_9$ and pick a cell of type of $C_6$ as $M_1$. We pick three faces of $M_1$ that share a vertex $V$ and affix three copies of $C_9$, $M_2-M_4$, to those faces. We know that $M_2-M_4$ are pairwise adjacent on pentagonal faces that  all share an edge $E$ with endpoints $V$ and $V'$. As four 3-cells meet at each vertex we know some $M_5$ has a vertex at $V'$. $M_5$ must meet each of $M_2-M_4$ along a pentagonal face of type $p_9$, with the top vertex of each pentagon at $V'$. That is to say $M_5$ must be a polyhedron with three pentagonal faces of type $p_9$ meeting at their top points. No such admissible polyhedron exists and thus no partitions with 3-cells of type $C_9$. \\ \\
(viii)$n=10$|By the same argument as (vii) any partition with 3-cells of type $C_{10}$ must be composed with only them and 3-cells of type $C_6$, and no two cells of the same type adjacent along a square face. Letting red vertices represent 3-cells of type $C_6$ and blue vertices represent copies of $C_{10}$ the ego graph at a red vertex is an all blue $\overline{3K_2}$ while the ego graph at a blue vertex is as illustrated in figure 1.\\ \begin{figure}[h]
\caption{The ego graph at a vertex corresponding to $C_{10}$ with vertices labelled}
\includegraphics[width=8cm]{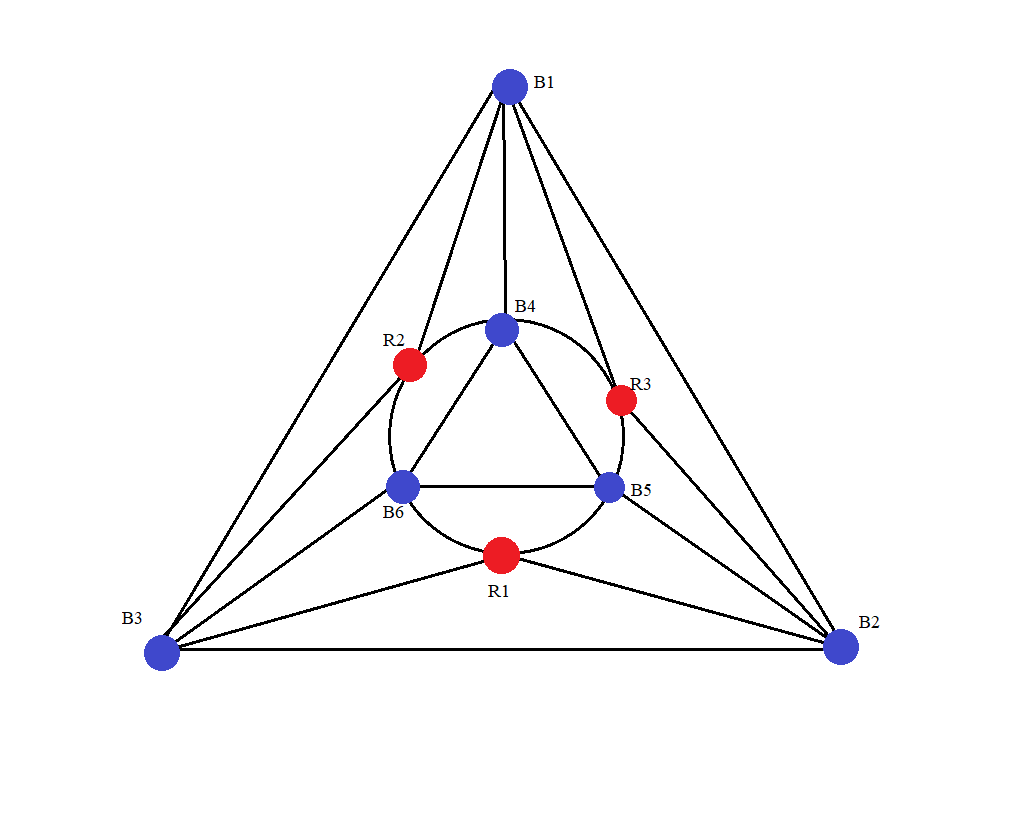}
\centering
\end{figure} To find the possible dual graph of such a partition we will construct a graph w. the prescribed ego graphs. Start by taking the ego graph at one blue vertex. Label the vertices as illustrated and the implicit blue vertex as $B_0$. From figure 1 we note that all blue vertices must have three red neighbors and any two blue vertices must share precisely two red neighbors. We conclude that there are two red vertices, $R_4$ and $R_5$ that are adjacent to $B_{1-3}$ and $B_{4-6}$ respectively. We also know that $R_4$ must be adjacent to two distinct blue 3-cliques and cannot be adjacent to $B_{4-6}$ so the graph must include three other blue vertices, $B_{7-9}$ all adjacent to one another and and $R_4$. Since the ego graph at a red vertex (specifically at $R_4$) is $\overline{3K_2}$ we find $B_7$ must be adjacent to $B_2$ and $B_3$, $B_8$ adjacent to $B_1$ and $B_3$, and $B_9$ adjacent to $B_1$ and $B_2$.\\

Next we take into account that adjacent blue vertices must share three blue neighbors and all edges of $B_1$ are accounted for so $B_4$ must also be adjacent to $B_8$ and $B_9$. This argument can then be applied to $B_5$ and $B_6$. Considering the ego graph of $R_5$ we then see that $R_5$ must be adjacent to $B_{7-9}$. This induced graph is therefore the only one with the local structure prescribed.\\

Geometrically this corresponds to five equidistributed 3-cells of type $C_6$ and ten of type $C_{10}$, with four of the latter meeting at points opposite the centers of the former. Call the corresponding cone $T_9$.\\

(ix)$n=11$| Consider the edge two rectangular faces of $C_{11}$ for any three copies of $C_{11}$ that meet at that edge three pentagonal faces meet at either vertex, but no 3-cell has 3 copies of $p_{11}$ meeting at one vertex so no tiling including $C_{11}$ exists.
\end{proof}

\section{Eliminating candidates}
With our candidates classified we can move onto the final step of discerning whether they are indeed minimal sets. We can show the cones $T_1$, $T_2$, and $T_3$ to be minimizers either by direct computation or by paired calibrations. The cone $T_4$ was shown to be a minimizer by Lawlor and Morgan \cite{LawlorMorgan94} using a paired calibration and the cone $T_6$ was shown to be a minimizer by Brakke \cite{Brakke1991} using more delicate, yet similar, methods. By direct comparison we will show that the other four candidates are not minimizers.\\ \\

The operation we will make most use of is what is described as a pop in Brakke's surface evolver. Restricting ourselves to the convex hull, $H_.$, of the vertices of a partition corresponding to cone $T_.$. Take a point $p$ in $\frac{1}{2}H_.$ away from $T_.$ and define $\phi:\mathbb{R}^4 \rightarrow \mathbb{R}^4$ s.t. if $x\in \frac{1}{2}H_.$ then $\phi(x)$ is the intersection of $\delta\frac{1}{2}H_.$ with the half-line from $p$ to $x$ and the identity otherwise. Clearly for fixed $T,H,p$ then $\phi$ is Lipschitz.

\begin{lemma}
$T_5$ is not a minimal set.
\end{lemma}
\begin{proof}
First we note that $Vol(T_5 \cap H_5) \approx 2.062$. We choose $p$ s.t. the half-line from $0$ to $p$ intersects with a cell of type $C_4$ lets us apply the pop function $\phi$. Choosing coordinates s.t. the two 3-cells of type $C_4$ are centred at $(0,0,0,\pm1)$ we enter $\phi(T_5)\cap H_5$ into the surface evolver \cite{BrakkeSurface} and to determine its volume is circa $2.133$, applying the 'go' function in the surface evolver 250 times gives us a smaller candidate with volume circa $1.98$. By direct comparison $T_5$ is not minimal.

\end{proof}
\begin{lemma}
$T_7$ is not a minimal set.
\end{lemma}
\begin{proof}
First we note that $Vol(T_7 \cap H_7) \approx 2.745$. We choose $p$ s.t. the half-line from $0$ to $p$ intersects with a cell of type $C_7$ lets us apply the pop function $\phi$. Choosing coordinates s.t. the two 3-cells of type $C_8$ are centred at $(0,0,0,\pm1)$ we enter $\phi(T_7)\cap H_7$ into the surface evolver \cite{BrakkeSurface} and to determine its volume is circa $2.759$, applying the 'go' function in the surface evolver 200 times gives us a smaller candidate with volume circa $2.671$. By direct comparison $T_7$ is not minimal.
\end{proof}
\begin{lemma}
$T_8$ is not a minimal set.
\end{lemma}
\begin{proof}
Again we will construct a direct competitor, but in a slightly different manner. Instead of defining $H_8$ in the usual manner, let it be $2B^4$. Pick any appropriate $p$ and apply the pop map $\phi$. 

On $B^4$ we replaced $720$ cones over solid spherical pentagons with $S^3$, less one cell of type $C_8$. On net we decreased the volume of the set by $$240(5\alpha-3\pi)-\frac{119}{60}\pi^2>11$$
 
\end{proof}

Finally a direct calculation in the surface evolver \cite{BrakkeSurface} shows $T_9$ is not minimal. Since no maneuvers were needed beyond refining and iterating 'go' it cannot be that $T_9$ has local the minimal structure we wanted. We suspect that there is a geometric obstruction either to constructing $C_{10}$ or partition $T_9$.

\begin{theorem}
There are precisely five piecewise linear three-dimensional minimal cones in $\mathbb{R}^4$. They are the three induced by taking the product of $\mathbb{R}$ with minimal cones in $\mathbb{R}^3$ and two with zero-dimensional singularity. They are the following:\\
    (1) The cone over the 2-skeleton of the 4-simplex.\\
    (2) The cone over the 2-skeleton of the hypercube.
\end{theorem}
\begin{proof}
In the last section we classified all possible candidates and in this section we eliminated all but five candidates, which have been demonstrated to be minimal by earlier results. \cite{Brakke1991}\cite{LawlorMorgan94}
\end{proof}

\bibliographystyle{amsplain}
\bibliography{lit}
\end{document}